\begin{document}
\title{Non-paradoxical action of automata groups on infinite words}
\author{Victoriia Korchemna}
\thanks {The author expresses her thanks to A. S. Oliynyk for introducing her to the topic of automata transformations}
\date{28.10.2018} 
\theoremstyle{plain}
\newtheorem{theorem}{Theorem}
\newtheorem{lemma}{Lemma}
\newtheorem{proposition}{Proposition}
\newtheorem{definition}{Definition}
\newtheorem{remark}{Remark}
\maketitle 
		
\begin{abstract}
	 We show, that groups, defined by wide class of automata,  including all polynomial ones, act on the set of infinite words not paradoxical.\\
\end{abstract}

\section{Introduction and definitions} 
\subsection{Rooted tree of words and it's isomorphisms}

 Let $X$ be a finite set, which will be called alphabet with elements called letters. We always suppose $|X| > 1$ (here $|X|$ denotes the cardinality of $X$). Let $X^*$ be the free monoid generated by $X$. The elements of this monoid are finite words $x_1x_2 ... x_n$, $x_i \in X$, including the empty word $\emptyset$. Denote by $X^w$ the set of all infinite words $x_1x_2 ... x_n ...$, $x_i \in X$. For each $l \in \mathbb{N}$ and $w=w_1...w_l... \in X^w$ set $w[:l]:=w_1...w_l$. In other words, $w[:l]$ is a finite word, formed by first $l$ letters of $w$.
 
 The set $X^*$ is naturally a vertex set of a rooted tree, in which two words are connected by an edge if and only if they are of the form $v$ and $vx$, where $v\in X^* $, $x \in X.$ The empty word $\emptyset$ is the root of the tree $X^*$.
 
 A map $f:X\to X$ is an endomorphism of the tree $X$, if for any two adjacent vertices $v$, $vx\in X^*$ the vertices $f(v)$ and $f(vx)$ are also adjacent, so that there exist $u \in X^*$ and $y \in X$ such that $f(v) = u$ and $f(vx) = uy$. An automorphism is a bijective endomorphism.
 
 \subsection{Automata and automorphisms of rooted trees}
 An automaton A is a quadruple $(X,Q,\pi,\lambda)$, where:
 \begin{itemize}
 	\item $X$ is an alphabet;
    \item $Q$ is a set of states of the automaton;
    \item $\pi : Q \times X \to X$ is a map, called the transition function of the automaton;
    \item $\lambda : Q \times X \to X$ is a map, called the output function of the automaton.
 \end{itemize}   
 
 An automaton is finite if it has a finite number of states. The maps $\pi,\lambda$ can be extended on $Q \times X^*$ by the following recurrent formulas:
 $$\pi(q,\emptyset) = q,\;\;\pi(q,xw) = \pi(\pi(q,x),w)$$
 $$\lambda(q,\emptyset) = \emptyset,\;\;\lambda(q,xw) = \lambda(q,x)\lambda(\pi(q,x),w),$$
 where $x \in X$, $q \in Q$, and $w \in X^*$ are arbitrary elements. Similarly, the maps $\pi,\lambda$ are
 extended on $Q \times X^w$.
 
 An automaton $A$ with a fixed state $q$ is called initial and is denoted by $A_q$.
 Every initial automaton defines the automorphism $\lambda(q,\cdot)$ of the rooted tree $X^*$, which we also denote by $A_q(\cdot) = \lambda(q,\cdot)$ (or q($\cdot$) if it is clear, which automaton it belongs to). We denote by $e$ a trivial state of automaton, i.e., such a state that defines a trivial automorphism of $X^*$. The action of
 an initial automaton $A_q$ can be interpret as the work of a machine, which being in the state $q$ and reading on the input tape a letter $x$, goes to the state $\pi(q,x)$, types on the output tape the letter $\lambda(q,x)$, then moves both tapes to the next position and proceeds further.
 
 \subsection{Moore diagrams}
 An automaton $A$ can be represented (and defined) by a labelled directed graph,
 called the Moore diagram, in which the vertices are the states of the automaton and
 for every pair $(q,x) \in Q \times X $ there is an edge from $q$ to $\pi(q,x)$ labelled by $x|\lambda(q,x)$.
 \begin{center} 	
 	\includegraphics[width=8cm, trim=0 450 0 300,clip]{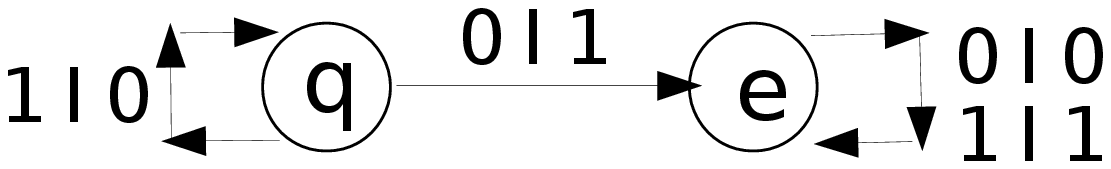} 
 \end{center}
 Here is the Moore diagram of automaton, called the adding machine. 
 Consider a word of length $l \in \mathbb N$ as a binary number, with lower digits on the left side. If the automaton gets the word in state q, it adds 1 modulo $2^l$ to it.
 
 \subsection{Inverse automaton. Composition of automata}
 An automaton is called invertible, if for each $q \in Q$ the mapping $\lambda(q,\cdot)$ is a bijection.
 Suppose that we have the Moore diagram of invertible automaton $A$. Let us swap all the left labels with right ones that correspond to them. After renaming the states $q \to q^{-1}$ we get a Moore digram of some automaton $A^{-1}$, which is called the inverse automaton of $A$. Notice that for each $q \in Q$ the state $q^{-1}$ of $A^{-1}$  defines the automorphism of a rooted tree which is inverse to $q$.
 
 Further in the article we omit the word "invertible" and consider only invertible automata.
 
 By giving the output of $A = (X, Q, \pi, \lambda)$ to the input of another automaton $B = (X, S, \alpha, \beta)$,
 we get an application which corresponds to the automaton called the composition of $A$ and $B$ and is denoted by $A*B$. This automaton is formally described as the automaton with the set of the states $Q \times S$ and the transition and output functions $\phi, \psi$ defined by
 $$\phi((q, s), x) = (\pi(q, x), \alpha(s, \lambda(q, x)))$$
 $$\psi((q, s), x) = \beta(s, \lambda(q, x))$$
 Notice that 
 a state $(q,s)$ of $Q\times S$  defines the automorphism of a rooted tree which is a superposition of ones defined by $q$ and $s$.
 
\subsection{Paradoxical actions of groups}
Let $G$ be a group acting on a set $X$ and suppose $E \subseteq X$. $G$ acts on $E$ paradoxically ($E$ is $G$-paradoxical) if for some positive integers $m,n$ there are pairwise disjoint subsets $A_1,...,A_n$, $B_1,...,B_n$ of $E$ and $g_1,...,g_n,\;h_1,...,h_n \in G$ such that $E=\bigcup g_i(A_i)=\bigcup h_i(B_i).$
Loosely speaking, the set $E$ has two disjoint subsets, each of which can be taken apart and rearranged via $G$ to cover all of $E$. Well known example is Banach-Tarski paradox, where subgroup of isometries of $R^3$ act's on a sphere.

\newpage
\section{Automata, that "almost always" move to the trivial state}

In the article we fix some alphabet $X$ and consider words over $X$ only. For arbitrary automata transformation $g$ denote by $NS(g,l)$ the number of words of length $l$ which move $g$ to the non-trivial state. 

\begin{proposition}
$NS(\bullet, \bullet)$ has the following properties:\\	
(1):  $NS(g,l)=NS(g^{-1},l)$\\
(2):  $NS(gh,l)\le NS(g,l) + NS(h,l)$,\\
where $g,h$ are arbitrary automata transformations, $l \in \mathbb{N}$
\end{proposition}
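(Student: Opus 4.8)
The plan is to rewrite $NS(g,l)$ as the number of words $w$ in the set $X^l$ of words of length $l$ for which the state $\pi(g,w)$ reached after reading $w$ defines a non-trivial automorphism, and then to exploit the single structural fact that $w\mapsto\lambda(g,w)$ is a permutation of $X^l$. This holds because $g$ defines an automorphism of the rooted tree $X^*$, which preserves each level $X^l$ and acts bijectively on it; that $|\lambda(g,w)|=|w|$ follows at once from the recurrence $\lambda(q,xw)=\lambda(q,x)\lambda(\pi(q,x),w)$. Two auxiliary observations are used throughout: the inverse of a trivial automorphism is trivial, and a superposition of two trivial automorphisms is trivial.

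For (1) I would trace one word through $A$ and then through $A^{-1}$. Reading $w\in X^l$ in state $g$ leads $A$ to the state $\pi(g,w)$ while printing $u:=\lambda(g,w)$. By the definition of the inverse automaton (swap the labels of the Moore diagram and rename $q\to q^{-1}$), feeding $u$ into $A^{-1}$ in state $g^{-1}$ leads to the state $(\pi(g,w))^{-1}$. Since $(\pi(g,w))^{-1}$ is trivial exactly when $\pi(g,w)$ is, the word $w$ is counted by $NS(g,l)$ iff $u$ is counted by $NS(g^{-1},l)$. As $w\mapsto u=\lambda(g,w)$ is a bijection of $X^l$, the two counts agree, giving $NS(g,l)=NS(g^{-1},l)$.

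For (2) I realize $g$ as a state of an automaton $A=(X,Q,\pi,\lambda)$ and $h$ as a state of $B=(X,S,\alpha,\beta)$, so that $gh$ is represented by the state $(g,h)$ of $A*B$. By induction on $|w|$, using the transition function $\phi$ of $A*B$, I would establish that the state reached after reading $w$ is $(\pi(g,w),\,\alpha(h,\lambda(g,w)))$; the case $w=\emptyset$ is immediate, and the inductive step combines the recurrences $\alpha(\alpha(h,y),v)=\alpha(h,yv)$, $\pi(\pi(g,x),w')=\pi(g,xw')$ and $\lambda(g,x)\lambda(\pi(g,x),w')=\lambda(g,xw')$. Such a state defines the automorphism obtained by superposing the automorphisms of its two coordinates, hence it is trivial whenever both coordinates are trivial. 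Consequently every word moving $gh$ to a non-trivial state lies in $\{w:\pi(g,w)\text{ is non-trivial}\}\cup\{w:\alpha(h,\lambda(g,w))\text{ is non-trivial}\}$. The first set has size $NS(g,l)$, while applying the bijection $w\mapsto\lambda(g,w)$ of $X^l$ to the second identifies it with $\{u:\alpha(h,u)\text{ is non-trivial}\}$, of size $NS(h,l)$; subadditivity of cardinality over a union gives $NS(gh,l)\le NS(g,l)+NS(h,l)$.

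I do not expect a genuine obstacle: both parts reduce to bookkeeping once the reached state is correctly described. The one place demanding care is the inductive identification of the composite state $(\pi(g,w),\alpha(h,\lambda(g,w)))$, where the order of superposition must be tracked faithfully through the definitions of $\phi$ and $\psi$. It is also worth noting that in (2) a mere containment suffices rather than an equality, since a word may move $gh$ to the trivial state while moving one of $g$, $h$ to a non-trivial one (the coordinate automorphisms cancelling), an effect that can only decrease the left-hand side.
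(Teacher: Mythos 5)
Your proof is correct and takes essentially the same route as the paper's: part (1) rests on the bijection $w\mapsto g(w)$ of $X^l$ together with the correspondence of states under inversion of the Moore diagram, and part (2) on realizing $gh$ as the state $(g,h)$ of $A*B$ and bounding the words that leave either coordinate non-trivial. The only cosmetic difference is that in (2) the paper argues complementarily with the sets of words reaching the trivial state and takes complements, while you union-bound the non-trivial sets directly; the counting is identical.
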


\begin{proof}
$(1)$ Let $w$ be a word with a length $l$ that doesn't move $g$ to the trivial state. Then $g(w)$ is a word with a length $l$ that doesn't move $g^{-1}$ to the trivial state. Since $g$ is a bijective mapping on $X^l$, $g(w)$ are different for different $w$. Therefore, $NS(g^{-1},l) \le NS(g,l).$. After replacing $g \to g^{-1}$, we get the opposite inequality.

$(2)$ For arbitrary automata transformation $q$ denote by $\textbf {S}(q,l)$ the set of words of length $l$ that move $q$ to the trivial state; $\textbf {NS}(q,l):=X^l \setminus \textbf {S}(q,l)$. Then $|\textbf {NS}(q,l)|= NS(q,l)$, $|\textbf {S}(q,l)|= |X|^l-NS(q,l)$.

As $g$ is a bijection on $X^l$, we have $|g(\textbf {NS}(g,l))|=|\textbf {NS}(g,l)|= NS(g,l)$. If a word $w\in X^l$ moves $g$ to the trivial state and so does $g(w)$ with $h$, then $w$ moves $gh$ to the trivial state. Therefore: 
$$\textbf {S}(gh,l) \supseteq g(\textbf {S}(g,l)) \cap \textbf{S}(h,l)=X^l \setminus (g(\textbf {NS}(g,l)) \cup  \textbf {NS}(h,l))$$
$$|\textbf {S}(gh,l)| \ge |X|^l-(NS(g,l)+NS(h,l))$$
$$NS(g,l)= |\textbf {NS}(gh,l)| \le NS(g,l)+NS(h,l)$$
\end{proof}
 
Denote by $G_0$ the set of all automata transformations $g$ for which the following holds: $$NS(g,l)=o(|X|^l), l \to \infty$$ To put it simply, "almost" all the words move the transformations to the trivial state.
 
$(1)$ and $(2)$ imply that $G_0$ is a group. Notice that $G_0$ includes all the polynomial automata. What do we know about it's action on $X^w$?
          
\begin{theorem}
	$X^w$ is not $G_0$-paradoxical.
\end{theorem}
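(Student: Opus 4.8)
The plan is to prove non-paradoxicality by exhibiting a single $G_0$-invariant, finitely additive probability measure $m$ defined on the full power set of $X^w$. This suffices: if such an $m$ exists with $m(X^w)=1$, then a decomposition $X^w=\bigcup_i g_i(A_i)=\bigcup_j h_j(B_j)$ with the $A_i,B_j$ pairwise disjoint cannot exist, because finite additivity together with invariance give $1=m(X^w)\le\sum_i m(g_i A_i)=\sum_i m(A_i)=m(\bigcup_i A_i)\le 1$, whence $m(\bigcup_i A_i)=1$ and likewise $m(\bigcup_j B_j)=1$, contradicting $m(\bigcup_i A_i)+m(\bigcup_j B_j)\le m(X^w)=1$. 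The whole difficulty is thus to construct $m$ on \emph{arbitrary} (possibly non-measurable) subsets in a $G_0$-invariant way.

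The naive candidate, the uniform Bernoulli measure on cylinders, is invariant under all tree automorphisms but is defined only on Borel sets; its inner and outer cylinder densities remain invariant yet fail to be additive on non-measurable sets, so they cannot by themselves exclude a paradox. To recover genuine additivity I would fix once and for all an infinite tail $\tau^*\in X^w$, choose a Banach limit $\mathrm{LIM}$ on bounded sequences indexed by $l$, and set $m(A)=\mathrm{LIM}_{l}\,|X|^{-l}\,\bigl|\{v\in X^l: v\tau^*\in A\}\bigr|$. Since each term is the normalized count of length-$l$ prefixes $v$ for which the single point $v\tau^*$ lands in $A$, the indicator value at $v\tau^*$ is defined for every subset $A$, so $m$ is a positive, unital, finitely additive measure on all subsets of $X^w$, independently of any measurability.

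The main step is $G_0$-invariance, and this is exactly where the hypothesis $NS(g,l)=o(|X|^l)$ is used. For $g\in G_0$ one has $m(gA)=\mathrm{LIM}_l\,|X|^{-l}\,\bigl|\{v\in X^l: g^{-1}(v\tau^*)\in A\}\bigr|$. Whenever the prefix $v$ moves $g^{-1}$ to the trivial state, the machine processes the tail unchanged, so $g^{-1}(v\tau^*)=g^{-1}(v)\,\tau^*$ with $v\mapsto g^{-1}(v)$ a permutation of $X^l$; for such $v$ the condition $g^{-1}(v\tau^*)\in A$ matches, after this bijection, the condition defining $m(A)$. The only prefixes disturbing the tail are the $NS(g^{-1},l)=NS(g,l)$ exceptional ones, using property $(1)$ of the Proposition, so the two counts differ by at most $NS(g,l)$. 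Dividing by $|X|^l$ leaves a discrepancy of order $NS(g,l)/|X|^l\to 0$, which the Banach limit annihilates; hence $m(gA)=m(A)$ for every $g\in G_0$ and every $A$.

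I expect the delicate point to be precisely this uniform error estimate: one must verify that altering the tail on the exceptional prefixes changes the count by at most $NS(g,l)$, a bound independent of $A$, so that the $o(|X|^l)$ hypothesis forces the per-prefix discrepancy to vanish in the limit no matter how wild $A$ is. Granting this, $m$ is a $G_0$-invariant finitely additive probability measure on all subsets of $X^w$, and the counting argument of the first paragraph then shows that $X^w$ is not $G_0$-paradoxical.
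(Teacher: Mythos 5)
Your proof is correct, but it takes a genuinely different route from the paper's. You construct a $G_0$-invariant, finitely additive probability measure on all of $2^{X^w}$ --- the Banach limit of the uniform measures on the finite sets $\{v\tau^*: v\in X^l\}$ --- and invoke the standard fact that an invariant mean excludes a paradoxical decomposition. The delicate step you flag does go through uniformly in $A$: writing $S\subseteq X^l$ for the prefixes sending $g^{-1}$ to the trivial state, the count $|\{v: g^{-1}(v\tau^*)\in A\}|$ splits as $|\{u\in g^{-1}(S): u\tau^*\in A\}|$ plus at most $NS(g,l)$ exceptional terms, and since $g^{-1}$ permutes $X^l$ with $|g^{-1}(S)|=|X|^l-NS(g,l)$, the whole count lies within $NS(g,l)$ of $|\{u\in X^l: u\tau^*\in A\}|$, a bound independent of $A$ that the Banach limit annihilates. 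The paper instead argues directly by contradiction: it takes a block $F$ of $s|X|^l$ consecutive words (with respect to the odometer structure on $X^w$), observes that each sub-block of $|X|^l$ consecutive words has all of $X^l$ as its set of length-$l$ prefixes, and counts ``coins'' to show that at most $s\sum_i NS(h_i,l)+|F'|$ coins can reach $F$, which is less than $2|F|$ for large $l$. Both arguments exploit the same F\o lner-type consequence of $NS(g,l)=o(|X|^l)$, but yours buys more: an explicit invariant mean on all subsets, hence (by positivity) non-paradoxicality of every subset of positive measure, and by Tarski's theorem this is the sharpest form of the conclusion. The cost is the axiom of choice (Hahn--Banach), whereas the paper's coin-counting is elementary and choice-free, if less transparent in its ``consecutive words'' bookkeeping. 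Your test sets with a single fixed tail are also simpler than the paper's $F$, and anticipate the almost-periodic sets $P^l_{(T)}$ the paper only introduces for Theorem~2.
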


\begin{proof}
Assume that $X^w$ is $G_0$-paradoxical. Then there are the elements  $h_1, ..., h_d$ of $G_0$ and the partition $X^w=A_1\sqcup...\sqcup A_d$ with such property:
\textit{
Let initially every word from $X^w$ have 1 coin. Then $h_i$ moves a coin from every $a_i \in A_i$ to $h_i(a_i)$, $1 \le i \le d$. After that every word from $X^w$ has at least 2 coins.}

For arbitrary $l,s \in \mathbb{N}$, $s \ge 8$ consider the set of $s\cdot|X|^l$ consecutive words\\ $F:=\{w+1,..., w+s|X|^l\}\subset X^w$ and its superset of $(s+2)|X|^l$ consecutive words\\ $F':=\{w+1-|X|^l,..., w+(s+1)|X|^l\}$, put $\neg F':=X^w\setminus F'$. Split $F$ into subsets of $|X|^l$ consecutive words $F=F_1 \sqcup...\sqcup F_s$. Notice that in every such subset the beginnings (first $l$ letters) of words form the set $X^l$.
Fix an arbitrary $i\in\{1...d\}$. Denote $g_i=h_i^{-1}$ The number of words, which are moved by  $g_i$ from $F$ to $\neg F'$ is not greater then $s \cdot NS(h_i,l)$. Actually, consider arbitrary $F_j \subset F$, $1\le j \le s$. If for some $v\in F_j$, $g_i(v)$ is in $\neg F'$, then $g_i$ changes a letter with position number greater then $l$ in word $v$. It means that if $g_i$ receives $v[:l]$ it doesn't move to trivial state after reading it. As $\{v[:l]|v \in F_j\}=X^l$, number of such words in $F_j$ is not greater then $NS(g_i,l)=NS(h_i,l)$. So in $F$ this number is not greater then $s \cdot NS(h_i,l)$. 

The result can be reworded: the number of words in $\neg F'$, from which $h_i$ brings coins to $F$, is not greater then $s \cdot NS(h_i,l)$. So, for each $i \in \{1,...,d\}$, $h_i$ brings not more then $s \cdot NS(h_i,l)$ coins to $F$ from $\neg F'$. Then all $h_i$ bring at most $s \cdot \sum_{i=1}^{d} NS(h_i,l)$ coins.  
As $NS(h_i,l)=o(|X|^l), l \to \infty$, we can get $$s \cdot \sum_{i=1}^{d} NS(h_i,l) \le \frac{1}{4}s|X|^l$$ by choosing large $l$. Therefore, if $|F|$ is quiet large, then the number of coins, which are moved from $\neg F'$ to $F$ is not greater then $\frac{1}{4}|F|$. Since $s \ge 8$, $2 \le \frac{1}{4}s$, then at most $(s+2)|X|^l \le \frac{5}{4}s|X|^l=\frac{5}{4}|F|$ coins are moved to $F$ from $F'$ (in particular from $F$). Together, the number of coins at $F$ becomes at most $\frac{5}{4}|F|+\frac{1}{4}|F|=\frac{3}{2}|F|<2|F|$. It means then some words of $F$ get less then 2 coins. Contradiction. 
\end{proof}

\begin{remark} 
 There are transformations, defined by infinite and not polynomial automata, which satisfy conditions of Theorem 1. For example, all non-trivial states of the following automaton define such transformations:

\begin{center}
\includegraphics[width=6cm, angle=270, viewport=100 50 550 1200, clip]{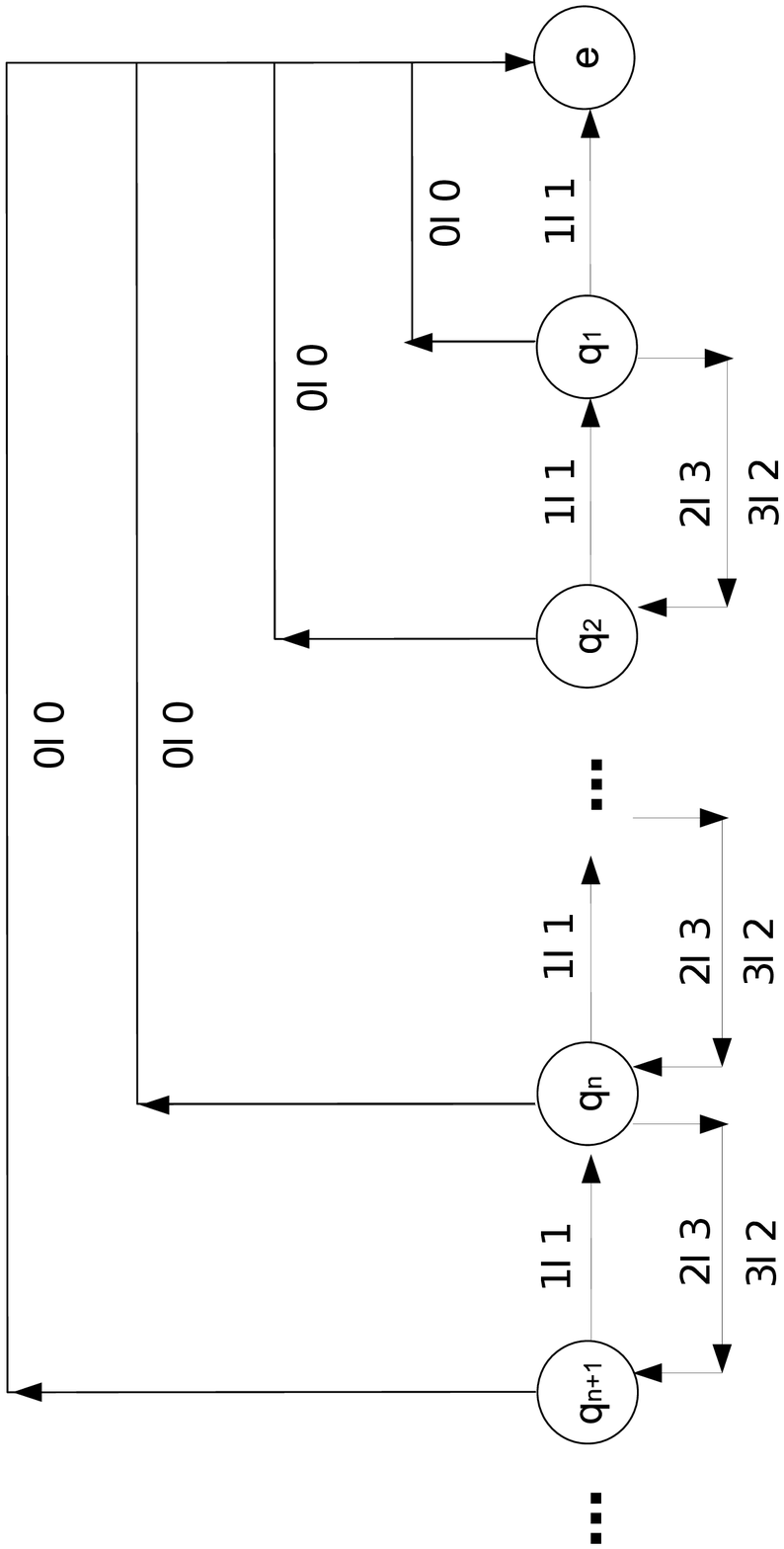}
\end{center}

Each finite word without $0$ and $1$ moves $q_i, i \ge 1$ to non-trivial state. On the other hand, every word that contains $0$ moves them to the trivial state. Therefore, \\ $2^l \le NS(q_i,l) \le 3^l$, where $2^l$ is a number of words without $1$ and $0$, $3^l$ - without $0$ only. Since $\frac{3^l}{4^l}\to 0$, $l\to \infty$, all the states $q_i, i \ge 1$ are acceptable.\\
\end{remark} 

\newpage
\section{Automata, that "almost always" move to cycles}

Theorem 1 doesn't cover some simple automata transformations, which act on $X^w$ non-paradoxically, such as pictured below:
\begin{center}
	\includegraphics[width=2cm, angle=90, trim=100 100 100 100, clip]{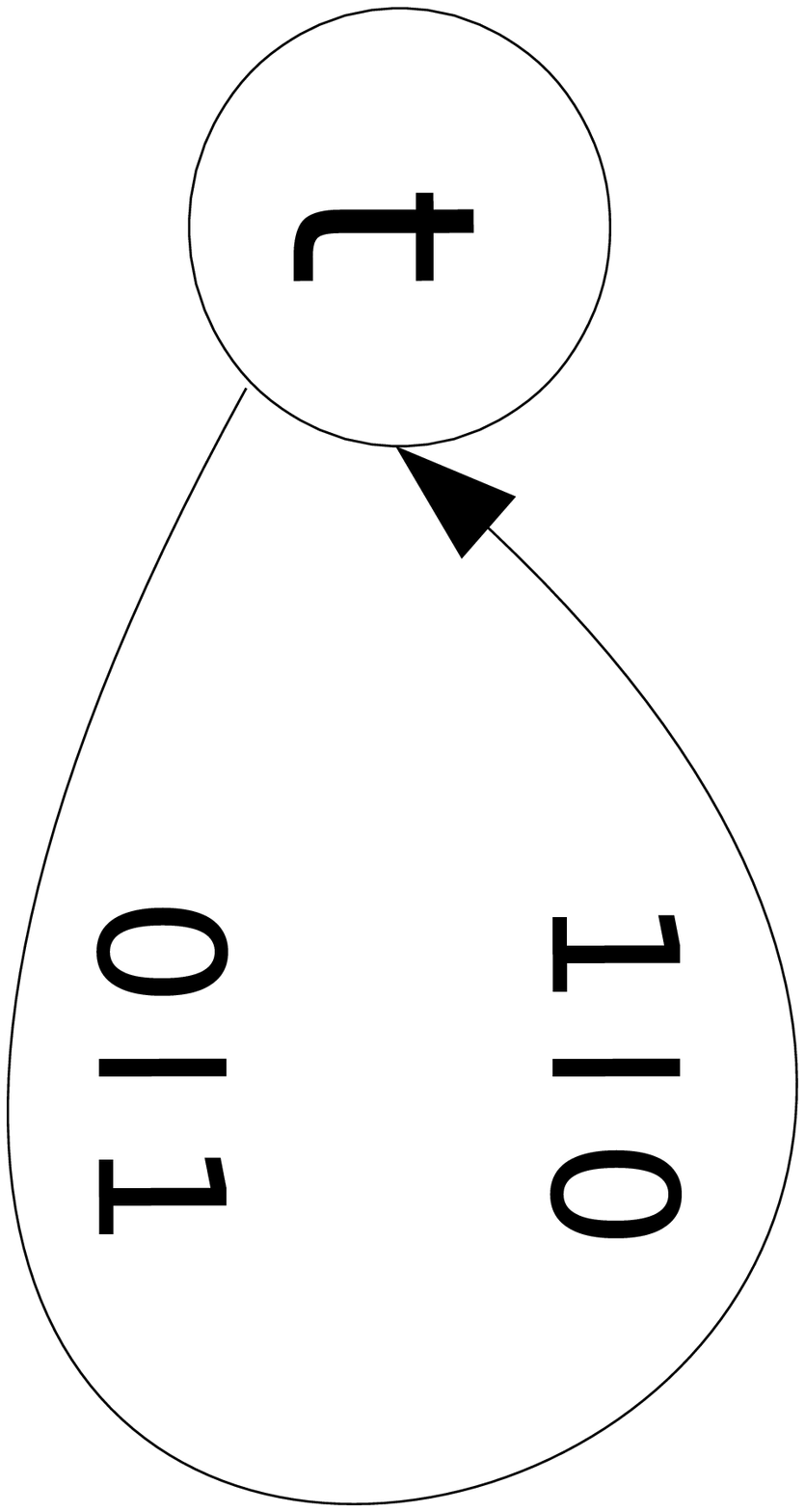} 
\end{center}
In theorem 1, "most" of words must move the automaton to the trivial state. In fact, we can generalise the condition. 

\begin{definition}
	the states $g_1,...,g_n$ of the same automaton form an unconditional cycle (UC), if the transition functions $\pi(g_i,\cdot), 1\le i \le n$ don't depend of input data and have the form $g_1 \to ... \to g_n \to g_1$.
\end{definition}

Apparently, a trivial state is an instance of UC. We will say that a word $w\in X^w \cup X^*$ moves the automata transformation $g$ of the automaton $A$ to a UC in $s$ steps, if $A$, receiving $w$ in $g$, gets into one of the states of the UC after processing first $s$ letters. Notice the following:

\begin{proposition} 
	Assume that $w\in X^l$ moves $g$ to an UC of length $n$ and  $g(w)$ moves $h$ to an UC of length $m$. Then $w$ moves $gh$ to some UC of length $LCM(n,m)$.
\end{proposition}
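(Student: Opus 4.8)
The plan is to realize $gh$ as the state $(g,h)$ of the composition automaton $A*B$ from Subsection 1.4 and to track where this state lands after processing $w$, using the explicit formula for the transition function $\phi$. First I would fix automata $A=(X,Q,\pi,\lambda)$ and $B=(X,S,\alpha,\beta)$ realizing $g$ and $h$ as states, so that $gh=(g,h)$. By hypothesis, reading $w$ moves $g$ into a UC $g_1\to\cdots\to g_n\to g_1$, say to the state $\pi(g,w)=g_a$, and reading $g(w)=\lambda(g,w)$ (which again has length $l$) moves $h$ into a UC $h_1\to\cdots\to h_m\to h_1$, say to $\alpha(h,\lambda(g,w))=h_b$. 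By the definition of $\phi$, after processing $w$ the composition $(g,h)$ is exactly in the state $\bigl(\pi(g,w),\,\alpha(h,\lambda(g,w))\bigr)=(g_a,h_b)$.

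The key step is to show that the pair states $(g_i,h_j)$ themselves form unconditional cycles in $A*B$. Applying $\phi((g_i,h_j),x)=\bigl(\pi(g_i,x),\,\alpha(h_j,\lambda(g_i,x))\bigr)$, I observe that $\pi(g_i,x)=g_{i+1}$ because the $g_i$ form a UC (transition independent of $x$), and $\alpha(h_j,\lambda(g_i,x))=h_{j+1}$ because the $h_j$ form a UC (transition independent of whatever output letter $\lambda(g_i,x)$ is fed in). Hence $\phi((g_i,h_j),x)=(g_{i+1},h_{j+1})$, with indices read modulo $n$ and $m$ respectively, and this value does not depend on $x$. Thus the orbit $C=\{(g_{a+k},h_{b+k}):k\ge 0\}$ is closed under $\phi$, its transitions ignore the input, and the states of $C$ form a single directed cycle.

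Finally I would count the length of this cycle. The map $k\mapsto(g_{a+k},h_{b+k})$ returns to $(g_a,h_b)$ precisely when $k\equiv 0\pmod n$ and $k\equiv 0\pmod m$, i.e. when $LCM(n,m)\mid k$; so $|C|=LCM(n,m)$ and $C$ is a UC of length $LCM(n,m)$ containing $(g_a,h_b)$. Since $w$ drives $(g,h)$ into $C$, it moves $gh$ to this UC, as claimed. I do not expect a serious obstacle: the only point needing genuine care is the middle step, namely checking that feeding $h$ the a priori unknown output letters of $g$ is harmless because $h$'s transitions inside its own UC ignore the input, together with the verification that the period equals $LCM(n,m)$ exactly rather than a proper divisor.
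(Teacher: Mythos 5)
Your proof is correct and follows essentially the same route as the paper: both realize $gh$ as the state $(g,h)$ of the composition automaton $A*B$ and observe that the pair states inherited from the two unconditional cycles form a single unconditional cycle of length $LCM(n,m)$. You merely supply the details the paper leaves implicit, namely the explicit computation $\phi((g_i,h_j),x)=(g_{i+1},h_{j+1})$ independent of $x$ and the verification that the resulting cycle has length exactly $LCM(n,m)$.
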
 

\begin{proof} 
We consider $gh$ as a state $(g,h)$ of $A*B$. Let $w$ move $g$ to the UC $g_1,...,g_n$ in $s$ steps and $g(w)$ move $h$ to the UC $h_1,...,h_m$ in $t$ steps. Then after processing first $max\{s,t\}$ letters of $w$ the state $(g,h)$ moves to the UC $(g_1,h_1),...,(g_n,h_m)$ of length $LCM(n,m)$.
\end{proof}  
  
For arbitrary automata transformation $g$ denote by 
$NC_g(l)$ a number of words with length $l$ which don't move $g$ to any UC, and by $\mathbb{NC}(g,l)$ - a set of infinite words that start from them. In other words, $\mathbb{NC}(g,l)\subset X^w$ consists of words that don't move $g$ to any UC while processing the first $l$ letters. 

\begin{proposition} 
$NC(g,l)$ has the following properties:\\
$(1'):\;\;NC(g,l)=NC(g^{-1},l)$\\
$(2'):\;\;NC(gh,l)\le NC(g,l) + NC(h,l)$,\\
where $g,h$ are arbitrary automata transformations, $l \in \mathbb{N}$
\end{proposition}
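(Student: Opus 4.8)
The plan is to follow the template of Proposition 1, replacing ``moving to the trivial state'' by ``moving to a UC'' and substituting Proposition 2 (the $LCM$ lemma) for the elementary composition fact used there. For a transformation $q$ write $\textbf{C}(q,l)\subseteq X^l$ for the set of length-$l$ words that drive $q$ into some UC while the first $l$ letters are processed, so that $NC(q,l)=|X|^l-|\textbf{C}(q,l)|$. Both claims then reduce to understanding how the sets $\textbf{C}(\cdot,l)$ behave under inversion and composition.

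For $(1')$ the key preliminary observation is that inversion carries UCs to UCs. Indeed, if $g_1\to\dots\to g_n\to g_1$ is a UC, then for each $i$ the transition $\pi(g_i,x)=g_{i+1}$ is independent of $x$; passing to the inverse automaton (swap the two labels on every edge, rename $q\to q^{-1}$) turns each edge $g_i\xrightarrow{x\mid\lambda(g_i,x)}g_{i+1}$ into $g_i^{-1}\xrightarrow{\lambda(g_i,x)\mid x}g_{i+1}^{-1}$, and since $\lambda(g_i,\cdot)$ is a bijection these edges exhaust all input letters. Hence $g_i^{-1}$ moves to $g_{i+1}^{-1}$ regardless of input, so $g_1^{-1}\to\dots\to g_n^{-1}\to g_1^{-1}$ is again a UC. Next I would check, by induction on the prefix length, that the state trajectory of $g^{-1}$ reading $g(w)$ mirrors that of $g$ reading $w$: if $g$ reaches state $q_k$ after $w[:k]$, then $g^{-1}$ reaches $q_k^{-1}$ after $g(w)[:k]$, because the edge of the inverse automaton labelled by the input letter $\lambda(q_{k-1},w_k)$ leads exactly to $\pi(q_{k-1},w_k)^{-1}$. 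Combining the two observations, $w$ moves $g$ into a UC within the first $l$ letters if and only if $g(w)$ moves $g^{-1}$ into a UC within the first $l$ letters (in the same number of steps). Since $g$ is a bijection of $X^l$, this equivalence means $g$ maps $\textbf{C}(g,l)$ onto $\textbf{C}(g^{-1},l)$, whence $|\textbf{C}(g,l)|=|\textbf{C}(g^{-1},l)|$ and therefore $NC(g,l)=NC(g^{-1},l)$.

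For $(2')$ I would view $gh$ as the state $(g,h)$ of $A*B$ and invoke Proposition 2 directly: if $w\in\textbf{C}(g,l)$ and $g(w)\in\textbf{C}(h,l)$, say $w$ drives $g$ into a UC of length $n$ in $s\le l$ steps and $g(w)$ drives $h$ into a UC of length $m$ in $t\le l$ steps, then $(g,h)$ enters a UC of length $LCM(n,m)$ after $\max\{s,t\}\le l$ steps, so $w\in\textbf{C}(gh,l)$. This yields the inclusion $\textbf{C}(gh,l)\supseteq\textbf{C}(g,l)\cap g^{-1}(\textbf{C}(h,l))$. Complementing in $X^l$, applying subadditivity of cardinality, and using that $g^{-1}$ preserves cardinalities gives
$$NC(gh,l)\le |X^l\setminus\textbf{C}(g,l)|+|X^l\setminus g^{-1}(\textbf{C}(h,l))|=NC(g,l)+NC(h,l),$$
which is $(2')$.

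I expect the only genuinely delicate point to be the inversion step in $(1')$ — namely verifying that the inverse states really do form a UC and that the two trajectories stay synchronized letter by letter — since everything else is either a direct cardinality estimate copied from Proposition 1 or an immediate application of the already-proven $LCM$ lemma. In particular, the essential content of $(2')$ (that composing two UC-bound words produces a UC-bound word) has been isolated into Proposition 2, so the remaining work there is purely the combinatorial bookkeeping of complements and a union bound.
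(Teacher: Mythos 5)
Your proof is correct and follows essentially the same route as the paper: for $(1')$ both arguments pass to the inverse automaton, observe that inversion carries UCs to UCs and that the trajectory of $g^{-1}$ on $g(w)$ mirrors that of $g$ on $w$; for $(2')$ both invoke Proposition 2 and then repeat the complement-and-union-bound bookkeeping from Proposition 1. The only difference is that you spell out the two details the paper merely asserts (why the inverted cycle is still unconditional, and the letter-by-letter synchronization of the two trajectories), which is a welcome addition rather than a deviation.
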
 

\begin{proof} 
 $(1')$ We are going to show that if $w \in X^l$ moves $g$ to the UC $g_1,...,g_n$ then $g(w) \in X^l$ moves $g^{-1}$ to the UC $g_1^{-1},...,g_n^{-1}$. Consider the Moore diagram of some automaton $A$, that contains $g$. Starting from $g$ and moving along left labels that form the word $w$, we achieve the UC $g_1,...,g_n$ in $k \le l$ steps. Let us swap all the left labels with right ones that correspond to them. After renaming the states $h \to h^{-1}$ we get a Moore digram of the automaton $A^{-1}$. This transformation keeps UC, moreover, starting from $g^{-1}$ and moving along left labels that form $g(w)$, we achieve the UC $g_1^{-1},...,g_n^{-1}$ in $k$ steps. So if $w \in X^l$ moves $g$ to the UC $g_1,...,g_n$ then $g(w) \in X^l$ moves $g^{-1}$ to the UC $g_1^{-1},...,g_n^{-1}$. As $g$ is a bijection, we have $NC(g^{-1},l) \le NC(g,l)$. Similarly an opposite inequality can be gotten. 
  
  $(2')$
  Assume that $w\in X^l$ moves $g$ to some UC and $g(w)$ moves $h$ to some (maybe another) UC. Then according to proposition 1 $w$ moves $gh$ to an $UC$. The rest of the proof is similar to one of $(2)$. We just have to replace $NS \to NC$, $\mathbb{NS} \to \mathbb{NC}$, $\mathbb{S} \to \mathbb{C}$.
\end{proof}  

Now we can see that automata transformations $g$ for which $NC(g,l)=o(|X|^l), l \to \infty\\$ form a group. Denote it by $G_1$. Generalising theorem 1, we are going to prove the next statement:

\begin{theorem} 
 $X^w$ is not $G_1$-paradoxical.
\end{theorem}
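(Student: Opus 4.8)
The plan is to reuse the coin-counting scheme of Theorem~1, changing only the finite set $F$ on which the count is performed. Unwinding the definition of paradoxicality, I would first record that there are finitely many transformations $f_1,\dots,f_d\in G_1$ (namely the $g_i$ together with the $h_i$) acting on pairwise disjoint pieces $C_1,\dots,C_d\subseteq X^w$, and that each of the two families of images covers $X^w$. Hence for \emph{every} finite $F\subseteq X^w$ one has $\sum_{k=1}^{d}|\{c\in C_k:f_k(c)\in F\}|\ge 2|F|$, since the $g$-cover fills $F$ once and the $h$-cover fills it a second time, out of disjoint sources. The whole statement therefore reduces to producing a single finite $F$ for which this same sum is strictly smaller than $2|F|$.

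The new ingredient is a description of how an element of $G_1$ acts once its prefix has driven it into a UC. I fix a level $l$ and, following the paper's terminology, call a word $c$ \emph{bad} for $f_k$ if $c[:l]$ fails to move $f_k$ into any UC and \emph{good} otherwise; each level-$l$ block then contains at most $NC(f_k,l)$ bad words. If $c=u\cdot t$ is good, with $u\in X^l$ sending $f_k$ to a UC-state $q$, then, because transitions inside a UC ignore the input, $f_k$ acts on the tail by $f_k(u\cdot t)=f_k(u)\cdot\tau^{(k)}_q(t)$, where $\tau^{(k)}_q$ permutes the tail letter by letter and its coordinate permutations are periodic with period the length of that UC. At the fixed level $l$ only finitely many states $q$ occur, so only finitely many maps $\tau^{(k)}_q$ arise; each has finite order, and the group $\Gamma$ they generate embeds in a finite product $\mathrm{Sym}(X)^{N}$, with $N$ the least common multiple of the cycle lengths, so $\Gamma$ is finite.

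I would then take $F=\bigsqcup_{t\in T}\{u\cdot t:u\in X^l\}$, where $T$ is one $\Gamma$-orbit of tails (finite, hence $F$ is a finite $\Gamma$-invariant union of level-$l$ blocks). Because $f_k(u\cdot t)$ has tail $\tau^{(k)}_q(t)$ and $\tau^{(k)}_q\in\Gamma$ preserves $T$, a good word lands in $F$ only when its own tail already lies in $T$; that is, every good coin reaching $F$ starts inside $F$. As the $C_k$ are disjoint and all these sources lie in $F$, the good part of the sum is at most $|F|$. For the bad part I would invoke the duality $(1')$ of Proposition~3: a word moves $f_k$ to a UC iff its $f_k$-image moves $f_k^{-1}$ to a UC, and $NC(f_k,l)=NC(f_k^{-1},l)$; this bounds the bad contribution by the number of words of $F$ that are bad for some $f_k^{-1}$, which is at most $|T|\sum_{k}NC(f_k,l)$. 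Since each $NC(f_k,l)=o(|X|^l)$ and there are finitely many $k$, taking $l$ large makes this at most $\varepsilon|F|$ with $\varepsilon<1$, so the whole sum is at most $(1+\varepsilon)|F|<2|F|$, contradicting the lower bound.

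I expect the main obstacle to be exactly the point where Theorem~1 breaks down: an element of $G_1$ such as the involutive letter-swap pictured at the start of this section can send words arbitrarily far in the ordering of $X^w$ by consecutive words while still moving to a UC, so the near/far dichotomy used for $G_0$ (coins that travel far are rare because they require a non-trivial state) is now worthless. The crux is therefore the finiteness of $\Gamma$: it is what lets me replace ``far coins are few'' by ``good coins cannot leave a $\Gamma$-invariant union of blocks''. The remaining care is bookkeeping — $\Gamma$, $N$ and $T$ all depend on $l$, but since they are fixed only after $l$ is chosen and the bad estimate needs merely that $l$ be large, the dependence is harmless; checking that each UC really induces a finite-order tail permutation (so that $\Gamma$ is genuinely finite) is the one place where the cyclic hypothesis is essential.
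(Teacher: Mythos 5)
Your proof is correct, and it follows the same coin-counting skeleton as the paper's: assume a paradoxical decomposition, exhibit a finite set $F$ fibered over $X^l$ that receives at least $2|F|$ coins, and show that the ``good'' coins (those whose source has already entered a UC after $l$ letters) contribute at most $|F|$ while the ``bad'' ones contribute $o(|F|)$ via $NC(\cdot,l)=o(|X|^l)$ and property $(1')$. Where you genuinely diverge is in the construction of $F$. The paper takes $F=P_N^l$, the $l$-almost periodic words whose period length divides $N!$, and needs Lemmas~1 and~2 to verify that a transformation already inside a UC sends an almost-periodic word to an almost-periodic word with period length dividing $\mathrm{lcm}(t,c)$, hence preserves $P_N^l$ up to the bad words. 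You instead observe that a state inside a UC acts on the tail by a letter-by-letter permutation with periodic coordinate permutations, that the finitely many tail maps arising at level $l$ generate a finite group $\Gamma\le\mathrm{Sym}(X)^N$, and you take for $F$ the set of words whose tail lies in a single $\Gamma$-orbit $T$; invariance of $F$ under the good part of the action is then automatic. The two constructions realize the same mechanism --- the paper's set of admissible tails is itself a finite $\Gamma$-invariant set, i.e.\ a union of your orbits --- but yours isolates the structural reason the argument works (finiteness of the tail group) and dispenses with the almost-periodicity bookkeeping and the $N!$ device, at the cost of a slightly more abstract setup; the paper's version is more explicit at the level of words. Your quantifier order is also handled correctly: $\Gamma$, $N$ and $T$ depend on $l$, but the decisive inequality $\sum_k NC(f_k,l)<|X|^l$ does not involve $|T|$, which cancels from both sides.
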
 

We need two auxiliary lemmas. Let us say that a word $w \in X^w$ is $l-$almost periodic if it has the form $w=uv$ where $u \in X^l$ is arbitrary and $v$ is periodic. Mentioning the period, we mean the shortest repeating word from $X^*$, that starts from $(l+1)th$ position of $w$. For example, word $123010101$ as a $3-$almost periodic word has period $01$. But as a $4-$almost periodic word it has period $10$.

\begin{lemma}  
	Let $w$ be $l-$almost periodic word with a period of length $t$. Assume that $w$ moves an automata transformation $g$ to UC of length $c$ in at most $l$ steps. Then $g(w)$ is $l-$almost periodic and the length of it's period divides $LCM(t,c)$.
\end{lemma}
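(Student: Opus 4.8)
The plan is to track the automaton's state and the emitted output letter position-by-position along the tail of $w$, exploiting the defining feature of a UC: once the automaton is inside it, the transition is input-independent, so the current state depends only on how many letters have been read (modulo $c$), not on which letters they were. This decouples the periodicity coming from the state ($c$) from the periodicity coming from the input ($t$).

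First I would fix notation. Write $w = w_1 w_2 \cdots$ with $w_1 \cdots w_l = u$ and periodic tail $v = w_{l+1} w_{l+2} \cdots$ of period $t$, so that $w_{l+1+k} = w_{l+1+k+t}$ for every $k \ge 0$. By hypothesis $g$ reaches the UC $g_1 \to \cdots \to g_c \to g_1$ after reading some $s \le l$ letters. Recalling from the recurrence for $\lambda$ that the $i$-th letter of $g(w)$ equals $\lambda(q_{i-1}, w_i)$, where $q_{i-1}$ is the state reached after reading $w_1 \cdots w_{i-1}$ (with $q_0 = g$), the first $l$ output letters form some word $u' \in X^l$ whose precise value is irrelevant.

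Next I would analyse the tail. For $k \ge 0$ the state $q_{l+k}$ reached just before reading $w_{l+1+k}$ lies in the UC (since $l+k \ge s$), and because the transition inside a UC ignores the input, $q_{l+k} = g_{\sigma(k)}$ for an index $\sigma(k)$ that advances by one at each step and hence depends only on $k \bmod c$. Meanwhile the input letter $w_{l+1+k}$ depends only on $k \bmod t$. Therefore the output letter $\lambda(g_{\sigma(k)}, w_{l+1+k})$ is a function of the pair $(k \bmod c,\, k \bmod t)$, which repeats with period $LCM(t,c)$ in $k$. Thus the part of $g(w)$ beyond position $l$ is periodic and admits $LCM(t,c)$ as a period, so $g(w) = u' v'$ with $v'$ periodic, i.e. $g(w)$ is $l$-almost periodic.

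Finally, to obtain divisibility rather than merely the existence of the period $LCM(t,c)$, I would invoke the standard fact that the minimal period of a purely periodic sequence divides every one of its periods: if $P_0$ is minimal and $P$ is any period, a short Euclidean-algorithm argument (using $s_{i+P-P_0} = s_{i+P} = s_i$) shows $\gcd(P_0,P)$ is again a period, forcing $P_0 \mid P$. Applying this with $P = LCM(t,c)$ shows that the length of the period of $g(w)$ divides $LCM(t,c)$, as claimed. I expect the only real care to be needed in the third step — correctly arguing that input-independence of the UC transitions makes the state depend only on $k \bmod c$ — whereas the index bookkeeping and the divisibility fact are routine.
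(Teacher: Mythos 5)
Your proof is correct and follows essentially the same route as the paper's: after the first $l$ letters the state depends only on the step count modulo $c$ (input-independence of the UC) and the input letter only on the step count modulo $t$, so the output repeats every $LCM(t,c)$ steps. You merely spell out the bookkeeping and the standard ``minimal period divides any period'' fact that the paper leaves implicit.
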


\begin{proof}
When the automaton has already processed the first $l$ letters of $w$, the following holds:
 \begin{itemize}
	\item Every $c$ steps the automaton moves to the same state.
    \item Every $t$ steps the automaton receives the same letter.
 \end{itemize}   
Therefore, every $LCM(t,c)$ steps it receives the same letter in the same state, so gives the same data to the output. 
\end{proof}

Denote by $P_n^l$ the set of all $l-$almost periodic infinite words with the length of period dividing $n!$.\\

\begin{lemma}
	Let $g$ be an automata transformation. Denote by $n(l)$ the maximal length of UC $g$ can move to after processing a word of length $l$. For arbitrary $c\ge n(l)$ there holds:
 $$g(P_c^l \setminus \mathbb{NC}(g,l)) \subseteq P_c^l$$
\end{lemma}

\begin{proof} 
Consider an arbitrary word $w \in P_c^l \setminus \mathbb{NC}(g,l)$. Let the automaton $A$, containing $g$, receive $w$ in the state $g$. After processing the first $l$ symbols by $A_g$, the periodic part of $w$ has already started. Besides that, $A_g$ has already moved to the UC (by definition of $\mathbb{NC}(g,l)$). Since $w \in P_c^l$, then the length of it's period divides $c!$. A length of the UC is not greater then $n(l) \le c$, so it also divides $c!$. According to lemma 1, $g(w)$ is $l-$almost periodic and length of it's period divides $LCM(c!,c!)=c!$. Therefore, $g(w) \in P_c^l$.
\end{proof}

Now we are ready to start the proof of theorem 2.
\begin{proof} 
Assume that $X^w$ is $G_1$-paradoxical. Then there are the elements  $h_1, ..., h_d$ of $G_1$ and the partition $X^w=A_1\sqcup...\sqcup A_d$ with such property:
\textit{
	Let initially every word from $X^w$ has 1 coin. Then $h_i$ moves a coin from every $a_i \in A_i$ to $h_i(a_i)$, $1 \le i \le d$. After that every word from $X^w$ has at least 2 coins.}    

Consider the set $P_N^l$, where $l$ will be defined later and $N=N(l)$ is the maximal length of UC that $h_1,...h_d$ can move to while processing words of length $l$. Fix an arbitrary $i \in \{ 1,...,d \}$. Denote $g_i=h_i^{-1}$. According to lemma 2, $g_i(P_{N}^l \setminus \mathbb{NC}_{g_i(l)}) \subseteq P_{N}^l$. So $g_i$ can move at most $|P_N^l \cap  \mathbb{NC}_{g_i(l)}|$ words from $P_{N}^l$ outside it. Estimate the number. Let $\mathbb{T}_N$ be the set of all periods $T \in X^*$ with length dividing $N!$. For each $T \in \mathbb{T}_N$ denote by $P_{(T)}^l$ the set of $l-$almost periodic infinite words with period $T$. Then we have:
$$P_N^l=\bigsqcup_{T \in \mathbb{T}_N} P_{(T)}^l$$
$$P_N^l \cap  \mathbb{NC}(g_i,l) = \bigsqcup_{T \in \mathbb{T}_N} (P_{(T)}^l \cap  \mathbb{NC}(g_i,l))$$
Notice that for each $v \in X^l$ there is exactly 1 word starting from $v$ in every $P_{(T)}^l ,T \in \mathbb{T}_N$. Therefore $|P_{(T)}^l|=|X|^l$, in particular $$|P_{(T)}^l \cap  \mathbb{NC}(g_i,l)|=NC(g_i,l)$$
$$|P_N^l \cap  \mathbb{NC}(g_i,l)|= \sum_{T \in \mathbb{T}_N} |P_{(T)}^l \cap  \mathbb{NC}(g_i,l)| = |\mathbb{T}_N| \cdot NC(g_i,l)$$
So, $g_i$ moves $|\mathbb{T}_N| \cdot NC(g_i,l)$ words from  $P_{N}^l$ outside it. It means that $h_i$ brings $|\mathbb{T}_N| \cdot NC(g_i,l)$ coins from $X^w \setminus P_N^l$  to  $P_N^l$. Then all $h_1,...,h_d$ bring at most $|\mathbb{T}_N|\sum_{i=1}^d NC(g_i,l)$ coins from $X^w \setminus P_N^l$  to  $P_N^l$. 
As there are $|\mathbb{T}_N| \cdot |X|^l$ words in $P_N^l$ and  $NC(g_i,l)=o(|X|^l), l \to \infty$, similarly to theorem 1 we have that $h_1,...h_d$ don't bring enough coins to $P_N^l$ for large $l$. 
\end{proof}
 
\newpage
 
\end{document}